\theoremstyle{plain}
   \newtheorem{theorem}{Theorem}[section]
   \newtheorem{lemma}[theorem]{Lemma}
   \newtheorem{corollary}[theorem]{Corollary}
   \newtheorem{problem}[theorem]{Problem}
   \newtheorem{conjecture}[theorem]{Conjecture}
\theoremstyle{definition}
\theoremstyle{remark}
   \newtheorem{remark}[theorem]{Remark}
\author[P.~Br\"and\'en]{Petter Br\"and\'en}
\address{Department of Mathematics, Royal Institute of Technology, SE-100 44 Stockholm,
Sweden}
\email{pbranden@math.kth.se}
\keywords{Permanent, $\alpha$-permanent, $\alpha$-determinant, positivity, hyperbolic polynomial, complete monotonicity, symmetric function mean}
\subjclass[2010]
{Primary: 15A15, 15A45; Secondary: 26B25}
\thanks{PB is a Royal Swedish Academy of Sciences Research Fellow
  supported by a grant from the Knut and Alice Wallenberg
  Foundation.}
\def\kkk{\kern.2ex\mbox{\raise.5ex\hbox{{\rule{.35em}{.12ex}}}}\kern.2ex}
\newcommand{\nn}{\mathbf{n}}
\newcommand{\NN}{\mathbb{N}}
\newcommand{\RR}{\mathbb{R}}
\newcommand{\CC}{\mathbb{C}}
\newcommand{\sym}{\mathfrak{S}}
\def\newop#1{\expandafter\def\csname #1\endcsname{\mathop{\rm
#1}\nolimits}}
\begin{document}

\title[Two problems on permanents]{Solutions to two problems on permanents}

\maketitle

\begin{abstract}
In this note we settle two open problems  in the theory of permanents by using recent results from other areas of mathematics. 
Both problems were recently discussed in Bapat's survey \cite{Bapat2}. Bapat conjectured that certain quotients of permanents, which generalize symmetric function means, are concave. We prove this conjecture by using concavity properties of hyperbolic polynomials. Motivated by problems on random point processes, Shirai and Takahashi raised the problem: Determine all real numbers  $\alpha$ for which the $\alpha$-permanent (or $\alpha$-determinant) is nonnegative for all positive semidefinite matrices. We give a complete solution to this problem by using recent results of Scott and Sokal on completely monotone functions.  It turns out that the conjectured answer to the problem is false. 
\end{abstract}

\section{Bapat's conjecture on quotients of permanents}

Recently Gurvits \cite{Gurvits} successfully used hyperbolic polynomials to prove inequalities for permanents and determinants. In this section we   
show how a conjecture (Conjecture \ref{bap} below) of Bapat on the concavity of certain quotients of permanents follows from concavity properties of hyperbolic polynomials. Recall that  if $A=(a_{ij})_{i,j =1}^n$ is a matrix, then  the \emph{permanent} of $A$ is defined by 
$$
\per(A) = \sum_{\sigma \in \sym_n} \prod_{i=1}^n a_{i\sigma(i)}, 
$$
where $\sym_n$ is the symmetric group on $\{1,\ldots, n\}$. 
\begin{conjecture}[Bapat, \cite{Bapat1}]\label{bap}
Let $b_0, b_1, \ldots, b_k$ be fixed vectors in $\RR_{++}^n:=(0,\infty)^n$, where $0\leq k<n$. The function 
\begin{equation}\label{bapeq}
x \mapsto \frac {\per(b_1, \ldots, b_k, x,\ldots, x)}{\per(b_0,b_1, \ldots, b_k, x,\ldots, x)}
\end{equation}
is concave on $\RR_{++}^n$. 
\end{conjecture}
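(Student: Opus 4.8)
The plan is to reduce Bapat's conjecture to the concavity properties of hyperbolic polynomials. First I would record a differentiation identity linking the numerator and denominator in \eqref{bapeq}: since $\per$ is multilinear and symmetric in its columns, expanding $\per(b_1,\ldots,b_k,x+sb_0,\ldots,x+sb_0)$ (with $n-k$ copies of $x+sb_0$) in powers of the real parameter $s$ and comparing the coefficients of $s$ gives
\[
\per(b_0,b_1,\ldots,b_k,x,\ldots,x)=\frac{1}{n-k}\,D_{b_0}p(x),\qquad p(x):=\per(b_1,\ldots,b_k,x,\ldots,x),
\]
where $D_{b_0}=\sum_{i=1}^n(b_0)_i\,\partial_{x_i}$ and $p$ is homogeneous of degree $d:=n-k$ in $x$. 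Thus the function in \eqref{bapeq} equals $(n-k)\,p(x)/D_{b_0}p(x)$, and it suffices to prove that $p/D_{b_0}p$ is concave on $\RR_{++}^n$.

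The second, and in my view central, step is to show that $p$ is a hyperbolic polynomial whose hyperbolicity cone contains $\RR_{++}^n$. Consider $F(z_1,\ldots,z_k,x_1,\ldots,x_n):=\prod_{i=1}^n\bigl(\sum_{j=1}^k(b_j)_i z_j+x_i\bigr)$, a product of linear forms all of whose coefficients are positive; in particular $F$ is a stable polynomial. Applying $\partial_{z_1}\cdots\partial_{z_k}$ and then setting $z=0$ extracts the coefficient of $z_1\cdots z_k$ from $F$, which a short computation identifies with $p(x)/(n-k)!$. Since stability is preserved both under partial differentiation and under substituting real values for variables, $p$ is stable in $x$; having positive coefficients, $p$ is then hyperbolic with respect to every $\omega\in\RR_{++}^n$, and $\RR_{++}^n$ lies in the corresponding open hyperbolicity cone $\Lambda$.

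To finish I would invoke G\aa rding's theory. Fix $v=b_0\in\RR_{++}^n\subseteq\Lambda$. Then the zeros of $t\mapsto p(x+tv)$ are real and, for $x\in\Lambda$, negative, so $p(x+tv)=p(v)\prod_{i=1}^d\bigl(t+\mu_i(x)\bigr)$ with $\mu_1(x),\ldots,\mu_d(x)>0$ on $\Lambda$. Comparing the coefficients of $t^0$ and $t^1$ yields $p(x)=p(v)\,e_d(\mu(x))$ and $D_{b_0}p(x)=p(v)\,e_{d-1}(\mu(x))$, whence for $x\in\Lambda$
\[
\frac{p(x)}{D_{b_0}p(x)}=\frac{e_d(\mu(x))}{e_{d-1}(\mu(x))}=\Bigl(\sum_{i=1}^d\frac{1}{\mu_i(x)}\Bigr)^{-1}.
\]
The map $\nu\mapsto e_d(\nu)/e_{d-1}(\nu)$ is symmetric and concave on $\RR_{++}^d$ (it is a positive multiple of the harmonic mean), and a symmetric concave function of the hyperbolic eigenvalues $\mu_1(x),\ldots,\mu_d(x)$ is concave on $\Lambda$ — this is the Bauschke--G\"uler--Lewis--Sendov spectral correspondence, itself a consequence of G\aa rding's concavity of $p^{1/d}$ on $\Lambda$. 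Hence $p/D_{b_0}p$ is concave on $\Lambda$, in particular on $\RR_{++}^n$, and multiplying by $n-k$ proves Conjecture~\ref{bap}.

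I expect the only real obstacle to be the second step — the recognition that the quotient of permanents is governed by a directional derivative of a genuinely hyperbolic polynomial, together with the verification that $p$ arises from a product of positive linear forms by hyperbolicity-preserving operations. The last step is then an off-the-shelf application of hyperbolic-polynomial calculus, the single external input being the transfer of concavity from symmetric functions of the spectrum to the hyperbolicity cone.
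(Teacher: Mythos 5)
Your argument is correct and follows the same overall strategy as the paper: identify the Bapat quotient, via multilinearity, with a positive constant times $p(x)/D_{b_0}p(x)$ where $p(x)=\per(b_1,\ldots,b_k,x,\ldots,x)$, show that $p$ is hyperbolic with hyperbolicity cone containing $\RR_{++}^n$, and conclude by the Bauschke--G\"uler--Lewis--Sendov concavity mechanism. Where you diverge is in how the two supporting steps are justified. The paper obtains hyperbolicity of $p$ by noting that $p$ is a constant multiple of $D_{b_1}\cdots D_{b_k}(x_1\cdots x_n)$ and quoting G\aa rding's lemma (Lemma \ref{internat}); you instead extract $p$ as a coefficient of the product of positive linear forms $F$ and use closure of real stability under differentiation and real specialization. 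Both are valid, but the paper's route is exactly what yields the more general Corollary \ref{conform} (arbitrary hyperbolic $h$, e.g.\ the mixed discriminant) at no extra cost, while yours is tailored to the permanent and the orthant. Second, the paper cites Lemma \ref{conca} (\cite[Corollary 4.6]{BGLS}) outright, whereas you re-derive it from the BGLS spectral transfer theorem applied to $e_d/e_{d-1}$, i.e.\ the harmonic mean of the hyperbolic eigenvalues; this is essentially how that corollary is proved, so you are proving the key lemma rather than invoking it. One point to tighten: the transfer theorem concerns symmetric concave (extended-real-valued) functions on all of $\RR^d$, while $\nu\mapsto e_d(\nu)/e_{d-1}(\nu)$ is concave only on $\RR_{++}^d$, so you should extend it (e.g.\ by continuity to the closed orthant and by $-\infty$ outside) before composing with the eigenvalue map and then restrict to $\Lambda$, where the eigenvalues are positive. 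This is routine and does not affect correctness; likewise your parenthetical attribution of the transfer principle to the concavity of $p^{1/d}$ is loose (it rests on G\aa rding's results on the eigenvalue map rather than on that particular inequality), but harmless.
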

A motivation for Conjecture \ref{bap} is the case when $b_0, b_1, \ldots, b_k$  are all equal to the vector of all ones. Then \eqref{bapeq} is equal to 
a constant multiple of 
\begin{equation}\label{kvo}
x \mapsto \frac{e_{n-k}(x)}{e_{n-k-1}(x)}, 
\end{equation}
where $e_k(x)$ is the $k$'th \emph{elementary symmetric function} in the variables $x=(x_1,\ldots, x_n)$. The function \eqref{kvo} is a \emph{symmetric function mean} and such are  known to be concave \cite{ML}. 

A homogeneous polynomial $h(x) \in \RR[x_1, \ldots, x_n]$ is \emph{hyperbolic} with respect to a vector 
$e \in \RR^n$ if $h(e) \neq 0$, and if for all $x \in \RR^n$ the univariate polynomial $t \mapsto h(x+et)$ has only real 
zeros, see \cite{BGLS, Garding, Ren1, Renegar}. Here are some examples of hyperbolic polynomials:
\begin{enumerate}
\item Let $h(x)= x_1\cdots x_n$. Then $h(x)$ is hyperbolic with respect to any vector $e \in \RR^n$ that has no coordinate equal to zero: 
$$
h(x+et) = \prod_{j=1}^n (x_j+e_jt).
$$
\item Let $x=(x_{ij})_{i,j=1}^n$ be a matrix of variables where we impose $x_{ij}=x_{ji}$. Then $\det(x)$ is hyperbolic with respect to 
$I=\diag(1, \ldots, 1)$. Indeed $t \mapsto \det(x+tI)$ is the characteristic polynomial of the symmetric matrix $x$, so it has only real zeros. 
\item Let $h(x)=x_1^2-x_2^2-\cdots-x_n^2$. Then $h$ is hyperbolic with respect to $(1,0,\ldots,0)$. 
\end{enumerate}

Suppose that $h$ is hyperbolic with respect to $e$, and of degree $d$. We may write 
$$
h(x+et) = h(e)\prod_{j=1}^d (t + \lambda_j(x)),
$$
where $\lambda_1(x) \leq \cdots \leq \lambda_d(x)$. The \emph{hyperbolicity cone} is the set 
$$
\Lambda_{\tiny{++}}= \Lambda_{\tiny{++}}(e)= \{ x \in \RR^n : \lambda_1(x) >0\}.
$$
G\aa rding \cite{Garding} proved that hyperbolicity cones are convex. The hyperbolicity cones for the examples  above are:
\begin{enumerate}
\item $\Lambda_{\tiny{++}}(e)= \{ x \in \RR^n : x_ie_i>0 \mbox{ for all } i\}$. 
\item $\Lambda_{\tiny{++}}(I)$ is the cone of symmetric positive definite matrices.
\item $\Lambda_{\tiny{++}}(1,0,\ldots,0)$  is the \emph{Lorentz cone} 
$$
\left\{x \in \RR^n : x_1 > \sqrt{x_2^2+\cdots+x_n^2}\right\}.
$$
\end{enumerate}
 Let $h(x_1,\ldots, x_n)$ be a homogeneous polynomial of degree $d$. Let $v_j= (v_{1j},\ldots, v_{nj})^T$ for $1\leq j \leq d$. The \emph{complete polarized form} of $h$ may be defined as 
 the form $H : (\RR^n)^d \rightarrow \RR$ defined by 
 $$
 H(v_1,\ldots, v_d) =  \frac 1 {d!} \prod_{j=1}^d \left(\sum_{i=1}^n v_{ij}\frac \partial {\partial x_i}\right) h(x),  
 $$
see G\aa rding \cite{Garding}. Note that $H$ is multilinear,  symmetric in $v_1,\ldots, v_d$, and $H(v,\ldots, v)= h(v)$. Now if $h=x_1\cdots x_n$, then 
\begin{equation}\label{homper}
H(v_1,\ldots, v_n)= \frac 1 {n!} \per(v_1,\ldots, v_n).
\end{equation}
For $u =(u_1,\ldots, u_n)^T \in \RR^n$ let
$
D_u = \sum_{i=1}^n u_i {\partial} /{\partial x_i}.
$
It follows that 
\begin{align*}
H(v_1,\ldots, v_k, u, \ldots, u) &= \frac {(d-k)!}{d!} \frac 1 {(d-k)!} \prod_{j=1}^{d-k} \left(\sum_{i=1}^n u_{i}\frac \partial {\partial x_i}\right)D_{v_1}D_{v_2}\cdots D_{v_k}h(x)\\
&= \frac {(d-k)!}{d!} D_{v_1}D_{v_2}\cdots D_{v_k}h(u).
\end{align*}

\begin{lemma}[G\aa rding, \cite{Garding}]\label{internat}
Suppose that $h$ is hyperbolic, and that $v \in \Lambda_{++}$. Then $D_vh$ is hyperbolic and its hyperbolicity cone contains $\Lambda_{++}$.  
\end{lemma}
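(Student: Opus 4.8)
The plan is to prove that $D_vh$ is hyperbolic with respect to $v$ itself (rather than with respect to $e$) by differentiating along the $v$-direction and invoking Rolle's theorem, and then to locate its hyperbolicity cone by tracking the signs of roots. Besides the facts recalled above, I would use two classical results of G\aa rding \cite{Garding}: that $h$ is hyperbolic not only with respect to $e$ but with respect to \emph{every} vector $v\in\Lambda_{++}$, and that the associated hyperbolicity cone is independent of the base vector, so that $\Lambda_{++}(v)=\Lambda_{++}(e)=\Lambda_{++}$ for all $v\in\Lambda_{++}$.

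First I would record the elementary identity $(D_vh)(x+sv)=\frac{d}{ds}\,h(x+sv)$, immediate from the definition of $D_v$ and the chain rule, and the consequence of Euler's formula that $(D_vh)(v)=d\,h(v)\neq 0$ for $v\in\Lambda_{++}$ (a point of the open hyperbolicity cone does not lie on $\{h=0\}$). Now fix an arbitrary $x\in\RR^n$ and set $\psi(s)=h(x+sv)$. Since $h$ is hyperbolic with respect to $v$, $\psi$ is a real-rooted polynomial, and it has degree exactly $d$ because its leading coefficient is $h(v)\neq 0$; hence by Rolle's theorem $\psi'(s)=(D_vh)(x+sv)$ is again real-rooted. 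As $x$ was arbitrary and $(D_vh)(v)\neq 0$, this shows $D_vh$ is hyperbolic with respect to $v$, in particular hyperbolic.

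For the containment of cones I would use the root-count bookkeeping built into the definition. With the normalization $h(x+vt)=h(v)\prod_j\bigl(t+\lambda^v_j(x)\bigr)$, membership $x\in\Lambda_{++}=\Lambda_{++}(v)$ is equivalent to all roots of $t\mapsto h(x+vt)=\psi(t)$ being strictly negative, and likewise $x$ lies in the hyperbolicity cone of $D_vh$ with respect to $v$ precisely when all roots of $t\mapsto(D_vh)(x+vt)=\psi'(t)$ are strictly negative. So if $x\in\Lambda_{++}$, then all roots of $\psi$ lie in $(-\infty,0)$, and by Rolle's theorem the roots of $\psi'$ lie in the interval spanned by the roots of $\psi$, hence also in $(-\infty,0)$; therefore $x$ belongs to the hyperbolicity cone of $D_vh$. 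Thus $\Lambda_{++}$ is contained in the hyperbolicity cone of $D_vh$, which, containing $e$, is then also the hyperbolicity cone of $D_vh$ with respect to $e$.

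The only genuinely nontrivial ingredients are the two cited facts of G\aa rding — that hyperbolicity propagates to every base vector in the open hyperbolicity cone, and that the cone does not depend on that choice; granting these, the rest is just Rolle's theorem applied in the $v$-direction together with a sign check. I do not expect a real obstacle beyond two bookkeeping points: checking that $\deg\psi=d$ (ensured by $h(v)\neq 0$) so that differentiation loses no roots, and keeping straight that "$\Lambda_{++}$" in the statement is the hyperbolicity cone of $h$, not of $D_vh$.
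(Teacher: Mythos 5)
Your argument is correct. The paper offers no proof of this lemma---it is quoted from G\aa rding \cite{Garding}---so there is nothing to compare line by line; what you give is the standard derivation: granting G\aa rding's theorem that $h$ is hyperbolic with respect to every $v\in\Lambda_{++}$ and that the cone does not depend on the base vector (facts proved independently of the present statement, so there is no circularity), Rolle/Gauss--Lucas applied to $\psi(s)=h(x+sv)$ gives both the real-rootedness of $s\mapsto (D_vh)(x+sv)$ and the strict negativity of its roots when $x\in\Lambda_{++}$, while Euler's identity supplies $(D_vh)(v)=d\,h(v)\neq 0$ and the condition $h(v)\neq 0$ guarantees $\deg\psi=d$, so no roots are lost under differentiation. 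Your final step, transferring hyperbolicity of $D_vh$ from the base vector $v$ to the base vector $e$, again invokes cone-invariance, now for $D_vh$, which is legitimate since $e\in\Lambda_{++}$ lies in the cone you have just shown $D_vh$ to possess.
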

The following lemma was proved in \cite[Corollary 4.6]{BGLS}, see also \cite{Renegar} where Lemma~\ref{conca} is strengthened. 
\begin{lemma}[Bauschke \emph{et al.}, \cite{BGLS}]\label{conca}
Suppose that  $h$ is hyperbolic and that $v \in \Lambda_{++}$. The function 
$$
x \mapsto \frac {h(x)}{D_vh(x)} 
$$
is concave on $\Lambda_{++}$. 
\end{lemma}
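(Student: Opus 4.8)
The plan is to express the quotient through the hyperbolic eigenvalues of $h$ and recognize it as a symmetric concave function of them. First, by Lemma~\ref{internat} the polynomial $D_vh$ is hyperbolic with hyperbolicity cone containing $\Lambda_{++}$, so $h$ and $D_vh$ are strictly positive on $\Lambda_{++}$ and $\phi:=h/D_vh$ is a well-defined positive function there. Next, since $v\in\Lambda_{++}$, I would use the classical fact (G\aa rding) that $h$ is hyperbolic with respect to $v$ as well, with the same hyperbolicity cone, and write $h(x+tv)=h(v)\prod_{j=1}^{d}(t+\lambda_j(x))$ with $0<\lambda_1(x)\le\cdots\le\lambda_d(x)$ for $x\in\Lambda_{++}$. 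Differentiating this in $t$ at $t=0$ and using $\tfrac{d}{dt}h(x+tv)=(D_vh)(x+tv)$ gives $D_vh(x)=h(x)\sum_{j=1}^{d}\lambda_j(x)^{-1}$, so that for $x\in\Lambda_{++}$
\[
\phi(x)=\Bigl(\sum_{j=1}^{d}\lambda_j(x)^{-1}\Bigr)^{-1}=\frac1d\,\operatorname{HM}\bigl(\lambda_1(x),\dots,\lambda_d(x)\bigr),
\]
where $\operatorname{HM}$ denotes the harmonic mean. (When $h=x_1\cdots x_n$ and $v=\one$ one has $\lambda_j(x)=x_j$ and $\phi=e_n/e_{n-1}$, recovering the motivating instance.)

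The harmonic mean is a symmetric concave function on $(0,\infty)^d$ — a classical fact, all power means $M_p$ with $p\le1$ being concave — hence $-\operatorname{HM}$ is symmetric and convex there. I would then deduce Lemma~\ref{conca} from the general principle, basic to the convex-analytic theory of hyperbolic polynomials, that a symmetric convex function composed with the hyperbolic eigenvalue map $x\mapsto(\lambda_1(x),\dots,\lambda_d(x))$ is a convex function on $\Lambda_{++}$; this is the substance of the spectral-function results of Bauschke, G\"uler, Lewis, and Sendov \cite{BGLS}, and Lemma~\ref{conca} is precisely the instance obtained by taking $f=-\operatorname{HM}$. Applied here it gives that $x\mapsto-\operatorname{HM}(\lambda(x))=-d\,\phi(x)$ is convex on $\Lambda_{++}$, i.e., that $\phi$ is concave there. (If one uses the form of this principle stated for lower-semicontinuous $f$ on all of $\RR^d$, a routine regularization is needed, replacing $-\operatorname{HM}$ by $-\operatorname{HM}+\varepsilon\sum_j\lambda_j^{-1}$ and letting $\varepsilon\downarrow0$, since $\operatorname{HM}$ stays bounded rather than blowing up at the boundary of the orthant.)

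The real content, and the main obstacle, is the spectral transfer principle itself: it rests on majorization (``transfer'') properties of the eigenvalue map and is not merely a consequence of the convexity of $\Lambda_{++}$. For a self-contained route one could instead restrict $\phi$ to a segment $\ell(t)=x_0+tw$ contained in $\Lambda_{++}$, write $\phi\circ\ell=a/b$ with $a=h\circ\ell$ and $b=(D_vh)\circ\ell$ positive polynomials on that segment, and verify $(\phi\circ\ell)''\le0$ directly: after clearing the positive factor $b^3$ this becomes a Cauchy--Schwarz-type inequality whose essential inputs are G\aa rding's inequality $h\,\partial_w^2h\le\tfrac{d-1}{d}(\partial_wh)^2$ on $\Lambda_{++}$ together with its mixed analogue relating $h$ and $D_vh$, and the technical heart is to check that these suffice to close the estimate.
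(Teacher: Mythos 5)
The paper does not prove this lemma at all: it is imported verbatim as \cite[Corollary~4.6]{BGLS}, so there is no internal proof to compare against. Your argument is a correct reconstruction of the standard route to that result, and it is sound as written: since $v\in\Lambda_{++}$, G\aa rding's theorem lets you rewrite everything in terms of the eigenvalues relative to $v$ (same cone), the identity $D_vh(x)=h(x)\sum_j\lambda_j(x)^{-1}$ is a correct logarithmic-derivative computation on $\Lambda_{++}$, the harmonic mean is indeed concave on $(0,\infty)^d$, and the Davis-type spectral transfer theorem of \cite{BGLS} (symmetric convex functions of the hyperbolic eigenvalue map are convex) then yields the claim. Using that transfer theorem is not circular — it is an earlier, independent result in \cite{BGLS}, and in fact their own Section~4 corollaries are derived in essentially this way — but you should be clear that it, rather than anything you verify, carries all the depth here, as you yourself acknowledge. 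Two small points: the regularization you sketch for the domain issue works, but it can be avoided entirely by noting that convexity only needs to be checked on segments inside $\Lambda_{++}$, where all eigenvalue vectors lie in the open orthant, so the majorization $\lambda(\theta x+(1-\theta)y)\prec\theta\lambda(x)+(1-\theta)\lambda(y)$ plus Schur-convexity of $-\operatorname{HM}$ on the orthant suffices; and positivity of $h$ and $D_vh$ on the cone requires the harmless normalization $h(e)>0$ (the ratio is unaffected). Your closing ``self-contained'' route via G\aa rding-type inequalities is only a sketch with the key estimate unverified, so the proof should rest on the spectral-transfer argument, not on that paragraph.
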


In view of \eqref{homper} we see that Conjecture \ref{bap} is the special case of Corollary \ref{conform} when $h=x_1\cdots x_n$. 

\begin{corollary}\label{conform}
Let $h$ be a hyperbolic polynomial in $\RR[x_1,\ldots, x_n]$ and let $b_0, b_1, \ldots, b_k$ be fixed vectors in $\Lambda_{++}$. The function 
\begin{equation}\label{bapeq2}
x \mapsto \frac {H(b_1, \ldots, b_k, x,\ldots, x)}{H(b_0,b_1, \ldots, b_k, x,\ldots, x)}
\end{equation}
is concave on $\Lambda_{++}$. 
\end{corollary}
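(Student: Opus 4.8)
The plan is to reduce Corollary~\ref{conform} to Lemma~\ref{conca} by recognizing the quotient~\eqref{bapeq2} as a constant multiple of $g/D_{b_0}g$ for an appropriate hyperbolic polynomial $g$. Write $d=\deg h$; for \eqref{bapeq2} to be meaningful one needs $0\le k\le d-1$, so that the denominator still contains at least one copy of $x$. Applying the polarization identity
$$H(v_1,\ldots,v_k,u,\ldots,u) = \frac{(d-k)!}{d!}\,D_{v_1}\cdots D_{v_k}h(u)$$
established just above Lemma~\ref{internat} (with $d-k$ copies of $u$), the numerator of \eqref{bapeq2} equals $\frac{(d-k)!}{d!}D_{b_1}\cdots D_{b_k}h(x)$; applying the same identity with the $k+1$ fixed vectors $b_0,b_1,\ldots,b_k$ and $d-k-1$ copies of $x$, the denominator equals $\frac{(d-k-1)!}{d!}D_{b_0}D_{b_1}\cdots D_{b_k}h(x)$. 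Dividing, the factorials collapse to the constant $d-k$, giving
$$\frac{H(b_1,\ldots,b_k,x,\ldots,x)}{H(b_0,b_1,\ldots,b_k,x,\ldots,x)} = (d-k)\,\frac{g(x)}{D_{b_0}g(x)}, \qquad \text{where } g:=D_{b_1}\cdots D_{b_k}h.$$
Since partial derivatives commute, $g$ does not depend on the order of the $b_i$ and is well defined.

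The next step is to show that $g$ is hyperbolic and that its hyperbolicity cone contains $\Lambda_{++}$. This is an iterated application of Lemma~\ref{internat}: put $h_0=h$ and $h_j=D_{b_j}h_{j-1}$ for $1\le j\le k$. By induction on $j$ one has $\Lambda_{++}=\Lambda_{++}(h_0)\subseteq\Lambda_{++}(h_1)\subseteq\cdots\subseteq\Lambda_{++}(h_k)=\Lambda_{++}(g)$: the inductive step is legitimate because $b_j\in\Lambda_{++}\subseteq\Lambda_{++}(h_{j-1})$, so Lemma~\ref{internat} applies to $h_{j-1}$ in the direction $b_j$ and yields that $h_j$ is hyperbolic with $\Lambda_{++}(h_{j-1})\subseteq\Lambda_{++}(h_j)$. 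In particular $b_0\in\Lambda_{++}\subseteq\Lambda_{++}(g)$.

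Finally I would invoke Lemma~\ref{conca} with the hyperbolic polynomial $g$ and the vector $v=b_0\in\Lambda_{++}(g)$: it gives that $x\mapsto g(x)/D_{b_0}g(x)$ is concave on $\Lambda_{++}(g)$, hence a fortiori on its convex subset $\Lambda_{++}$. Multiplying by the positive constant $d-k$ preserves concavity, so \eqref{bapeq2} is concave on $\Lambda_{++}$. As the excerpt notes, specializing to $h=x_1\cdots x_n$ and using \eqref{homper} then recovers Conjecture~\ref{bap}.

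I do not expect a genuine obstacle here: the argument is essentially an unwinding of the polarization formula followed by two cited lemmas. The only points requiring care are the bookkeeping of the factorial constants in the two applications of the polarization identity (they must collapse to exactly $d-k$), and the inductive verification that each intermediate derivative $h_j$ stays hyperbolic with a cone large enough to accommodate the next direction $b_{j+1}$ and, at the end, $b_0$ — that is, that the chain of cone inclusions supplied by Lemma~\ref{internat} propagates $\Lambda_{++}$ all the way up to $\Lambda_{++}(g)$.
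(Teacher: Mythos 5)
Your proposal is correct and follows essentially the same route as the paper: express both numerator and denominator via the polarization identity, apply Lemma~\ref{internat} (iterated) to see that $g=D_{b_1}\cdots D_{b_k}h$ is hyperbolic with cone containing $\Lambda_{++}$, and conclude by Lemma~\ref{conca}. Your explicit tracking of the constant $d-k$ and of the chain of cone inclusions is just a more detailed writing of what the paper does in two lines.
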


\begin{proof}
Suppose that the degree of $h$ is $d$. By Lemma \ref{internat} the polynomial
$$
g(x):= H(b_1, \ldots, b_k, x,\ldots, x)=\frac {(d-k)!}{d!} D_{b_1}\cdots D_{b_k}h(x)
$$
is hyperbolic with hyperbolicity cone containing $\Lambda_{++}$. The function \eqref{bapeq2} is equal to 
$g(x)/D_{b_0}g(x)$, and so the proof follows from Lemma \ref{conca}. 
\end{proof}
\begin{remark}
If $h(x)= \det(x)$, acting on symmetric matrices of size $n \times n$, then the complete homogenized form is the \emph{mixed discriminant}
$$
H(A_1, \ldots, A_n)= \frac 1 {n!} \frac {\partial^n}{\partial x_1\cdots \partial x_n} \det\left(\sum_{i=1}^n x_iA_i\right).
$$
Hence if $A_0, \ldots, A_k$ are fixed positive definite matrices, then the function 
$$
A \mapsto  \frac {H(A_1, \ldots, A_k, A,\ldots, A)}{H(A_0,A_1, \ldots, A_k, A,\ldots, A)}
$$
is concave on the cone of positive definite matrices. This also holds for complex hermitian matrices since the determinant on complex hermitian matrices is again hyperbolic. 
\end{remark}

\section{ $\alpha$-permanents and complete monotonicity}
The $\alpha$-permanent, introduced by Vere--Jones \cite{VJ}, interpolates between the determinant and the permanent.  
Let $\alpha \in \RR$ and $A=(a_{ij})$ be an $n\times n$ matrix.  The $\alpha$-\emph{permanent}
and $\alpha$-\emph{determinant} of $A$ are  defined by 
$$
\per_\alpha(A)= \sum_{\sigma \in \sym_n} \alpha^{c(\sigma)}\prod_{i=1}^n a_{i\sigma(i)} \quad \mbox{ and } \quad  \det_{\alpha}(A) = \alpha^n \per_{1/\alpha}(A),
$$
where $c(\sigma)$ denotes the number of disjoint cycles of $\sigma$. Motivated by problems on random point processes,  Shirai and Takahashi \cite{Sh1,Sh2} posed the following problem:
\begin{problem}
For which $\alpha \in \RR$ is 
\begin{enumerate}
\item $\det_\alpha(A) \geq 0$ for all real symmetric positive semidefinite matrices $A$? Let $D_\RR$ be the set of such $\alpha$'s. 
\item $\det_\alpha(A) \geq 0$ for all  complex hermitian positive semidefinite matrices $A$?  Let $D_\CC$ be the set of such $\alpha$'s. 
\end{enumerate}
\end{problem}
Shirai \cite{Sh2} proved that 
$$\{\pm 1/(m+1) : m \in \NN\} \subseteq D_\CC \subseteq \{-1/(m+1) : m \in \NN\} \cup  [0,1] \quad \mbox{ and}$$ 
$$\{- 1/(m+1) : m \in \NN\}\cup \{2/(m+1): m \in \NN\}\subseteq D_\RR \subseteq \{-1/(m+1) : m \in \NN\}\cup [0,2].$$ Moreover, Shirai and Takahashi conjectured: 

\begin{conjecture}[Shirai and Takahashi, \cite{Sh1,Sh2}]\label{shirai}
\begin{align*}
D_\RR &=\{-1/(m+1) : m \in \NN\}\cup [0,2]  \mbox{ and }\\
D_\CC &= \{-1/(m+1) : m \in \NN\}\cup [0,1]. 
\end{align*}
\end{conjecture}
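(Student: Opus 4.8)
The plan is to reduce Conjecture~\ref{shirai} to a question about complete monotonicity and then appeal to the Scott--Sokal theory of inverse powers of polynomials. Three of the four inclusions making up the conjecture are already contained in Shirai's results quoted above --- the two upper inclusions and $\{-1/(m+1):m\in\NN\}\subseteq D_\RR\cap D_\CC$ --- so everything comes down to proving $[0,1]\subseteq D_\CC$ and $[0,2]\subseteq D_\RR$. Since $\det_0(A)=\prod_i a_{ii}\ge 0$ trivially and $\det_\alpha(A)=\alpha^n\per_{1/\alpha}(A)$ with $\alpha^n>0$ for $\alpha>0$, these interval inclusions are equivalent to: $\per_\beta(A)\ge 0$ for every complex hermitian positive semidefinite $A$ of every size and every real $\beta\ge 1$, and the same statement with ``complex hermitian'' replaced by ``real symmetric'' and ``$\beta\ge 1$'' by ``$\beta\ge 1/2$''.

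The second step is to convert this to a statement about generating functions, using the identity of Vere--Jones \cite{VJ} in the multivariate form employed by Shirai and Takahashi \cite{Sh1,Sh2}: with $Z=\diag(z_1,\dots,z_n)$,
$$
\det(I-ZA)^{-\beta}=\sum_{\mathbf{m}\in\NN^n}\frac{z_1^{m_1}\cdots z_n^{m_n}}{m_1!\cdots m_n!}\,\per_\beta\!\big(A[\mathbf{m}]\big),
$$
where $A[\mathbf{m}]$ denotes $A$ with its $i$-th row and column repeated $m_i$ times. Each $A[\mathbf{m}]$ is positive semidefinite and every positive semidefinite matrix occurs this way (take $\mathbf{m}=(1,\dots,1)$), so ``$\per_\beta\ge 0$ on all positive semidefinite matrices'' is precisely the statement that $\det(I-ZA)^{-\beta}$ has only nonnegative Taylor coefficients at the origin, for every positive semidefinite $A$. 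Writing $A=V^{*}V$ and using $\det(I+XY)=\det(I+YX)$ together with the Cauchy--Binet formula, one has
$$
\det(I+TA)=\sum_{S\subseteq\{1,\dots,n\}}\Big(\prod_{i\in S}t_i\Big)\det(A_S),\qquad T=\diag(t_1,\dots,t_n),
$$
a multiaffine real stable polynomial whose monomials $\prod_{i\in S}t_i$ run over the independent subsets $S$ of the columns of $V$; as in the Scott--Sokal setting, $\det(I+TA)^{-\beta}$ is completely monotone on $\RR_{++}^n$ precisely when $\det(I-ZA)^{-\beta}$ has nonnegative Taylor coefficients. So the problem is: for which $\beta$ is $P^{-\beta}$ completely monotone, where $P$ ranges over the positively weighted independence polynomials of representable matroids?

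The third step is to quote the Scott--Sokal characterization of the admissible exponents for such $P$ and intersect it over all positive semidefinite $A$ of all sizes; every uniform matroid $U_{r,n}$ with $n\ge r$ is realizable here (as the Gram matroid of vectors in general position), which exhibits matroids of arbitrary rank. Translating back through $\alpha=1/\beta$ and appending Shirai's negative points would then give $D_\RR$ and $D_\CC$.

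The obstacle, and the point where I expect this program to fail rather than succeed, is the claim that \emph{every} $\beta\ge 1$ (respectively $\beta\ge 1/2$) is admissible for \emph{every} representable matroid. The Scott--Sokal analysis shows that for $U_{r,n}$ with $n$ large the admissible exponents form a finite set of half-integers (in the real case; integers in the complex/hermitian case) together with a half-line $[c_r,\infty)$, where $c_r\to\infty$ as $r\to\infty$, and in particular contain no interval below the rank-dependent threshold. Since the rank is unbounded, intersecting over all sizes leaves only the half-integers $\{0,\tfrac12,1,\tfrac32,\dots\}$ (resp.\ the integers), so $D_\RR\cap(0,\infty)$ and $D_\CC\cap(0,\infty)$ come out as the discrete sets $\{2/(m+1):m\in\NN\}$ and $\{1/(m+1):m\in\NN\}$ --- exactly Shirai's already known subsets --- rather than $[0,2]$ and $[0,1]$. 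In other words, this route does not prove Conjecture~\ref{shirai} but disproves its interval part; what remains is to make the failure explicit, namely to extract from the Scott--Sokal constructions specific positive semidefinite matrices $A$ with $\per_\beta(A)<0$ for the non-admissible exponents, which yields a counterexample and the corrected, discrete description of $D_\RR$ and $D_\CC$.
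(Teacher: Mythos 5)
Your proposal correctly recognizes that the statement, being a conjecture, is in fact false, and your route to refuting its interval part --- the Vere--Jones/Foata--Zeilberger master theorem (Theorem \ref{macmahon}) to translate $\per_\beta$-nonnegativity on positive semidefinite matrices into nonnegativity of the Taylor coefficients of $\det(I-ZA)^{-\beta}$, combined with the Scott--Sokal characterization of the exponents $\beta$ for which $\det\left(\sum_i x_iA_i\right)^{-\beta}$ is completely monotone --- is exactly the paper's proof of Theorem \ref{put}. The only step you leave implicit is the change-of-variables argument (expanding $P(y-x)^{-\beta}$ at a base point $y$ where complete monotonicity fails, and using $\det(I-UXU^T)=\det(I-XU^TU)$) needed to produce an explicit positive semidefinite matrix $U^TU[\nn]$ with $\det_\alpha(U^TU[\nn])<0$; the paper supplies this with Lemma \ref{flip} and Remark \ref{nn}.
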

We shall see that Conjecture \ref{shirai} is false, infact
\begin{theorem}\label{put}
\begin{align*}
D_\RR&=\{-1/(m+1) : m \in \NN\}\cup \{2/(m+1) : m \in \NN\}\cup\{0\} \mbox{ and}\\ 
D_\CC &= \{\pm1/(m+1) : m \in \NN\}\cup\{0\} . 
\end{align*}
 \end{theorem}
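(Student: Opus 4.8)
The plan is to reduce the positivity question to a statement about completely monotone functions, where the recent Scott–Sokal machinery applies. The key observation is that $\det_\alpha$ and $\per_\alpha$ are, up to normalization, the same object, and that for a positive semidefinite $A$ with eigenvalues $\lambda_1,\dots,\lambda_n$ one has the generating-function identity
$$
\sum_{n\geq 0} \frac{\per_\alpha(A_n)}{n!}\, t^n = \prod_{i} (1-\lambda_i t)^{-\alpha} = \det(I - tA)^{-\alpha},
$$
where $A_n$ ranges over the principal submatrices (this is Vere–Jones's original identity). More usefully, the quantity that controls everything is whether $x\mapsto \det(I+X)^{-\alpha}$, or equivalently $x\mapsto (\text{elementary symmetric functions of the }x_i)^{\text{suitable combination}}$, has the right sign pattern; this is governed by the function $f(x) = x^{-\alpha}$ and its behavior under the operation of forming $\alpha$-permanents. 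First I would make precise the dictionary: $\det_\alpha(A)\geq 0$ for all PSD $A$ of all sizes is equivalent to a complete monotonicity (or conditional positive-definiteness) statement for a function built from $z\mapsto z^\alpha$ or $z \mapsto (1-z)^{-\alpha}$.

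Second, I would invoke the Scott–Sokal results on completely monotone functions of matrix argument: they characterize exactly which powers $\alpha$ make $\det(I+A)^{-\alpha}$ (and the hermitian analogue) a completely monotone function on the PSD cone, and the answer involves precisely the "magic" values $\pm 1/(m+1)$ together with the interval where Lévy–Khinchine-type representations survive. The real-symmetric versus complex-hermitian dichotomy will come from the difference between the two relevant cones (the cone of real symmetric PSD matrices versus complex hermitian PSD matrices) and correspondingly between two families of special functions; the factor of $2$ in $D_\RR$ versus $D_\CC$ reflects that on real symmetric matrices one is effectively dealing with a "half-integer" parameter shift (the real case corresponds to the determinant raised to a power with a $1/2$ built in, i.e. Wishart-type exponents). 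Concretely, $2/(m+1)\in D_\RR$ corresponds to $1/(m+1)\in D_\CC$ under this reparametrization, and the discreteness outside $[0,1]$ (resp. outside $\{2/(m+1)\}$) is exactly the failure of complete monotonicity established by Scott–Sokal.

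Third, the two inclusions must be proved separately. For $\supseteq$: the values $-1/(m+1)$ and $0$ are classical (Shirai already has these), and $1/(m+1)$ for $D_\CC$, $2/(m+1)$ for $D_\RR$ follow because at these parameters $\det(I+A)^{-\alpha}$ is a positive power series composed appropriately — one can exhibit explicit positive combinatorial or representation-theoretic formulas (Jack-polynomial / zonal-polynomial expansions) showing nonnegativity of every coefficient, hence of every $\det_\alpha(A)$. For $\subseteq$: one must show that any $\alpha$ not in the listed set produces a counterexample. For $\alpha\in(0,1)\setminus\{1/(m+1)\}$ in the hermitian case (and the analogous real range), the point is that the would-be "complete monotonicity" fails at some finite order $n$, which is detected by a specific small matrix — here the Scott–Sokal analysis pinpoints the obstruction, and one extracts an explicit PSD matrix $A$ (of modest size depending on $\alpha$) with $\det_\alpha(A)<0$.

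The main obstacle I anticipate is the $\subseteq$ direction precisely on the interval $(0,1)$ (hermitian) and $(0,2)$ (real symmetric): showing that these non-special values fail is the whole novelty, since Conjecture~\ref{shirai} asserted the full intervals lie in $D_\CC, D_\RR$. The work is to translate "$z\mapsto(1-z)^{-\alpha}$ is not completely monotone as a function on the PSD cone when $\alpha\notin\{1/(m+1)\}$" — a Scott–Sokal theorem about the cone, not the half-line — into an explicit low-dimensional PSD matrix witnessing $\det_\alpha(A)<0$, and to check that the resulting matrix can be taken real symmetric in the real case. Everything else (the dictionary, the positive values, the negative values $-1/(m+1)$) is either classical or a direct unwinding of definitions.
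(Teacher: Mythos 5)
Your strategy is the paper's strategy in outline (Shirai's inclusions for the easy direction, Scott--Sokal's complete-monotonicity theorem plus a Vere--Jones/Foata--Zeilberger master-theorem identity for the hard direction), but the step you yourself flag as ``the main obstacle'' --- converting the failure of complete monotonicity into an explicit positive semidefinite matrix with negative $\alpha$-determinant --- is precisely the content of the proof, and you do not supply it. Moreover, the dictionary you set up is too weak to close this gap. The univariate identity you quote is not correct as stated (the expansion of $\det(I-tA)^{-\alpha}$ involves the inflated matrices $A[\nn]$ with repeated rows and columns, not principal submatrices), and even in corrected form the one-variable series only controls the aggregates $\sum_{|\nn|=n}\per_\alpha(A[\nn])/\nn!$, so a sign failure there neither follows from nor yields a single PSD witness. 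Also, Scott--Sokal's negative result is about inverse powers of the polynomial $P(x)=\det\left(\sum_i x_iA_i\right)$ in auxiliary variables, not directly about $\det(I+A)^{-\alpha}$ as a function of a matrix argument, so an extra construction is needed to connect the two; ``one extracts an explicit PSD matrix'' is exactly the assertion that has to be proved.

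The missing bridge, as the paper carries it out, is this. Fix $\alpha\in(0,2]\setminus\{2/(k+1)\}$ (resp.\ $(0,1]\setminus\{1/(k+1)\}$), so $\beta=1/\alpha\notin R(m)$ (resp.\ $C(m)$) for some $m$; choose rank-one PSD matrices $A_1,\dots,A_n$ spanning the $m\times m$ real symmetric (resp.\ hermitian) matrices. By Scott--Sokal there are $y\in\RR_{++}^n$ and $\nn$ with $(-1)^{|\nn|}\partial^{\nn}P^{-\beta}(y)<0$, i.e.\ $x\mapsto P(y-x)^{-\beta}$ has a negative Taylor coefficient. Normalizing $A=\sum_i y_iA_i$ by congruence, writing $A^{-1/2}A_iA^{-1/2}=u_iu_i^T$ and collecting the $u_i$ into $U$, one gets $P(y-x)=\det(A)\det(I-UXU^T)=\det(A)\det(I-XU^TU)$ using $\det(I-AB)=\det(I-BA)$; the multivariate identity $\det(I-XM)^{-\beta}=\sum_{\nn}\per_\beta(M[\nn])x^{\nn}/\nn!$ applied to the Gram matrix $M=U^TU$ then turns the negative coefficient into $\per_\beta(M[\nn])<0$, hence $\det_\alpha(M[\nn])<0$, and $M[\nn]$ is PSD because an inflation of a PSD matrix is PSD. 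Without this (or an equivalent) construction the $\subseteq$ direction --- the whole novelty, since it refutes Conjecture~\ref{shirai} on the intervals $(0,2)$ and $(0,1)$ --- remains unproved. The $\supseteq$ direction needs no Jack/zonal-polynomial work: it is already contained in Shirai's quoted inclusions.
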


The proof of Theorem \ref{put} relies on recent results on complete monotonicity due to Scott and Sokal \cite{ScSo}.

For $\nn =(n_1,\ldots, n_m) \in \NN^m$ and $A=(a_{ij})_{i,j=1}^m$, let $A[\nn]$ be the $|\nn| \times |\nn|$ matrix, where $|\nn|=\sum_{i=1}^m n_i$, obtained by replacing the 
$(i,j)$'th  entry of $A$ by an $n_i \times n_j$ matrix whose entries are all equal to $a_{ij}$. The next theorem is a generalization of the MacMahon Master Theorem. 

\begin{theorem}[Foata and Zeilberger, \cite{FZ}; Vere-Jones, \cite{VJ}]\label{macmahon}
Let $A=(a_{ij})_{i,j=1}^m$, $X=\diag(x_1,\ldots, x_m)$ and $\alpha \in \RR$. Then 
\begin{align*}
\det(I-XA)^{-\alpha} &= \sum_{\nn \in \NN^m} \per_\alpha(A[\nn])\frac {x^\nn}{\nn!} \mbox{ and }\\
\det(I-\alpha XA)^{-1/\alpha} &= \sum_{\nn \in \NN^m} \det_\alpha(A[\nn])\frac {x^\nn}{\nn!},  
\end{align*}
where $x^\nn= x_1^{n_1}\cdots x_m^{n_m}$ and $\nn!= n_1! \cdots n_m!$. 
\end{theorem}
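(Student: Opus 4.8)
The plan is to deduce the first identity from the exponential formula for exponential generating functions, and then obtain the second identity from the relation $\det_\alpha(B)=\alpha^{|B|}\per_{1/\alpha}(B)$ applied to each block matrix $A[\nn]$. So I focus on proving
\begin{equation*}
\det(I-XA)^{-\alpha}=\sum_{\nn\in\NN^m}\per_\alpha(A[\nn])\,\frac{x^\nn}{\nn!},
\end{equation*}
working in the ring of formal power series in $x_1,\dots,x_m$ (equivalently, for $x$ near the origin). On the left I would use the standard expansion
\begin{equation*}
\log\det(I-XA)^{-\alpha}=-\alpha\operatorname{tr}\log(I-XA)=\alpha\sum_{k\ge1}\frac{1}{k}\operatorname{tr}\big((XA)^k\big),
\end{equation*}
where $\operatorname{tr}\big((XA)^k\big)=\sum_{(i_1,\dots,i_k)\in[m]^k}a_{i_1i_2}a_{i_2i_3}\cdots a_{i_ki_1}\,x_{i_1}\cdots x_{i_k}$ is a sum over closed walks of length $k$ on $\{1,\dots,m\}$.

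For the right-hand side, index the rows and columns of $A[\nn]$ by coloured points $p=(i,s)$ with $i\in\{1,\dots,m\}$ and $1\le s\le n_i$, so that the $\big((i,s),(j,t)\big)$ entry of $A[\nn]$ is $a_{ij}$, depending only on the colours. Expanding $\per_\alpha(A[\nn])$ over the cycle decomposition of $\sigma\in\sym_{|\nn|}$ shows that $\per_\alpha(A[\nn])$ is the total weight of all ways of partitioning the coloured point set into cycles, where a cycle $(p_1,\dots,p_\ell)$ contributes $\alpha\cdot a_{c(p_1)c(p_2)}\cdots a_{c(p_\ell)c(p_1)}$ and $c(\cdot)$ denotes the colour. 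This weight being multiplicative over cycles, the (coloured) exponential formula gives
\begin{equation*}
\sum_{\nn\in\NN^m}\per_\alpha(A[\nn])\,\frac{x^\nn}{\nn!}=\exp\big(C(x)\big),\qquad C(x)=\sum_{\nn\neq0}w(\nn)\,\frac{x^\nn}{\nn!},
\end{equation*}
where $w(\nn)$ is the total weight of the single-cycle structures on a set of $n_i$ points of colour $i$, that is, $\alpha$ times the sum over the $(|\nn|-1)!$ cyclic arrangements of those points of the corresponding cyclic product of entries of $A$.

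The heart of the matter is to identify $C(x)$ with $\alpha\sum_{k\ge1}\frac1k\operatorname{tr}\big((XA)^k\big)$, which amounts to $w(\nn)/\nn!=\frac{\alpha}{|\nn|}\,[x^\nn]\operatorname{tr}\big((XA)^{|\nn|}\big)$ for every $\nn$. I would prove this by a weight-preserving bijection between linear orderings of the coloured point set and pairs (cyclic arrangement of the coloured points, choice of one of its $|\nn|$ positions as basepoint): cutting a cyclic arrangement open at the basepoint produces a linear ordering, and since this does not change the cyclic product of entries of $A$, the weights agree; moreover a linear ordering is the same data as a colour word $(i_1,\dots,i_\ell)\in[m]^\ell$ of type $\nn$ together with a colour-respecting bijection from its $\ell=|\nn|$ positions to the coloured points, of which there are exactly $\nn!$ for each colour word, with common weight $a_{i_1i_2}\cdots a_{i_\ell i_1}$. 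Counting both descriptions with weights yields the identity, whence $C(x)=\alpha\sum_{k\ge1}\frac1k\operatorname{tr}\big((XA)^k\big)=-\alpha\operatorname{tr}\log(I-XA)=\log\det(I-XA)^{-\alpha}$; exponentiating proves the first identity. The second then follows because $A[\nn]$ has size $|\nn|$, so $\det_\alpha(A[\nn])=\alpha^{|\nn|}\per_{1/\alpha}(A[\nn])$ and hence $\sum_\nn\det_\alpha(A[\nn])\frac{x^\nn}{\nn!}=\sum_\nn\per_{1/\alpha}(A[\nn])\frac{(\alpha x)^\nn}{\nn!}=\det(I-\alpha XA)^{-1/\alpha}$ by the first identity with $1/\alpha$ in place of $\alpha$ and $\alpha X$ in place of $X$ (the case $\alpha=0$ being the evident limit $\exp(\operatorname{tr}(XA))$).

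I expect the only real work to be bookkeeping in the two middle steps: phrasing the ``coloured points into cycles'' picture so that the exponential formula applies verbatim, and checking that the per-colour labels $s$ are absorbed exactly into the $\nn!$ denominators, so that the single-cycle generating function matches the trace series on the nose. The $\log\det$ expansion and the final substitution are routine. (Alternatively one could prove the identity first for positive integers $\alpha=N$---by overlaying $N$ independent permutation-walks and merging---and then extend to all real $\alpha$ using that, for fixed $\nn$, both sides are polynomials in $\alpha$ of degree at most $|\nn|$; the exponential-formula route looks cleaner.)
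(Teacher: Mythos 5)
The paper itself offers no proof of Theorem~\ref{macmahon}: it is quoted from Foata--Zeilberger and Vere--Jones, so there is nothing internal to compare against. Your argument is correct and is essentially the classical proof from those sources (Vere--Jones's original derivation runs along the same lines): expand $\log\det(I-XA)^{-\alpha}=\alpha\sum_{k\ge1}\frac1k\operatorname{tr}((XA)^k)$, interpret $\per_\alpha(A[\nn])$ as a sum over partitions of a coloured point set into cycles with multiplicative cycle weights, and match the connected (single-cycle) generating function with the trace series via the cut-a-cycle-at-a-basepoint bijection. The counting in your key step checks out: the $|\nn|!$ linear orderings split as $(|\nn|-1)!\cdot|\nn|$ pairs (cyclic arrangement, basepoint) on one side and as $\nn!$ orderings per colour word of type $\nn$ on the other, giving exactly $w(\nn)/\nn!=\frac{\alpha}{|\nn|}[x^\nn]\operatorname{tr}\bigl((XA)^{|\nn|}\bigr)$, and the weight (the cyclic product of entries of $A$) is preserved throughout since the entries of $A[\nn]$ depend only on colours. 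The deduction of the second identity from the first via $\det_\alpha(A[\nn])=\alpha^{|\nn|}\per_{1/\alpha}(A[\nn])$ and the substitution $X\mapsto\alpha X$, $\alpha\mapsto1/\alpha$ is also correct (with $\alpha=0$ handled separately as $\exp(\operatorname{tr}(XA))$, which matches $\det_0(A[\nn])=\prod_i a_{ii}^{n_i}$). The only housekeeping worth making explicit in a written version is that the identity is one of formal power series (or holds for $x$ in a neighbourhood of the origin), so that the $\log$--$\exp$ manipulations are legitimate; everything else is sound.
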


\begin{remark}\label{nn}
If $A$ is a hermitian positive semidefinite $m \times m$ matrix and $\nn \in \NN^m$, then 
$A[\nn]$ is positive semidefinite. Indeed, if $y$ is the column vector with entries $y_{ij}$ for $1\leq i \leq m$ and $1\leq j \leq n_i$, then
$$
\overline{y}^TA[\nn]y = \overline{x}^TAx \geq 0 
$$
where $x = (x_1, \ldots, x_m)^T$ has entries $x_i= \sum_{j=1}^{n_i}y_{ij}$. 
\end{remark}

Recall that a $C^{\infty}$-function $f : \RR_{++}^m \rightarrow \RR$ is \emph{completely monotone} if 
$$
(-1)^{|\nn|}\frac {\partial^{n_1}}{\partial x_1^{n_1}} \cdots \frac {\partial^{n_m}}{\partial x_m^{n_m}}f(x) \geq 0,
$$
for all $\nn \in \NN^m$ and $x \in \RR_{++}^m$. For $m \geq 1$ let 
$$
C(m) = \NN \cup \{ x \in \RR : x \geq m-1\} \mbox{ and } R(m) = \{ x/2 : x \in C(m)\}.
$$
\begin{theorem}[Scott and Sokal, \cite{ScSo}]\label{scso}
Let $A_1,\ldots, A_n$ be $m\times m$ real or complex hermitian matrices, and form the polynomial 
$$
P(x)= \det \left( \sum_{i=1}^n x_i A_i\right).
$$
Assume that $P \not \equiv 0$ and $\beta \geq 0$. 
\begin{enumerate}
\item If  $A_1,\ldots, A_n$ are real symmetric and positive semidefinite, then $P^{-\beta}$ is completely monotone for all $\beta \in R(m)$. If $A_1, \ldots, A_n$ span the space of 
$m\times m$ symmetric matrices, then $P^{-\beta}$ fails to be completely monotone for each $\beta \not \in R(m)$. 
\item If  $A_1,\ldots, A_n$ are complex hermitian positive semidefinite, then $P^{-\beta}$ is completely monotone for all $\beta \in C(m)$. If $A_1, \ldots, A_n$ span the space of 
$m\times m$ complex hermitian matrices, then $P^{-\beta}$ fails to be completely monotone for each $\beta \not \in C(m)$. 
\end{enumerate}

\end{theorem}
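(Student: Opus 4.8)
The plan is to derive both halves of the theorem from the classical description of the \emph{Wallach--Gindikin set} of a symmetric cone, combined with the multivariate Bernstein--Hausdorff--Widder theorem: a $C^\infty$ function $f$ on $\RR_{++}^n$ is completely monotone if and only if $f(x)=\int_{[0,\infty)^n} e^{-\langle x,t\rangle}\,d\nu(t)$ for some positive Radon measure $\nu$. Throughout, let $\mathcal K$ denote the closed convex cone at hand (real symmetric positive semidefinite $m\times m$ matrices, of dimension $N=\binom{m+1}{2}$, or complex hermitian ones, $N=m^2$); it is self-dual under $\langle Y,S\rangle=\operatorname{tr}(YS)$ and has rank $m$.

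\emph{Sufficiency.} The basic input is Gindikin's integral formula: for $\beta$ in $R(m)$ (resp.\ $C(m)$) there is a positive measure $\mu_\beta$ on $\mathcal K$ with
$$
\det(Y)^{-\beta}=\int_{\mathcal K} e^{-\operatorname{tr}(YS)}\,d\mu_\beta(S)\qquad(Y\in\operatorname{int}\mathcal K).
$$
On the continuous part ($\beta>\tfrac{m-1}{2}$, resp.\ $\beta>m-1$) one takes $d\mu_\beta(S)=c_\beta(\det S)^{\beta-(m+1)/2}\,dS$ (resp.\ $c_\beta(\det S)^{\beta-m}\,dS$), which is the normalizing identity for the real (resp.\ complex) matrix gamma function; on the discrete part ($\beta\in\{0,\tfrac12,1,\dots\}$, resp.\ $\beta\in\{0,1,\dots,m-1\}$) the representing $\mu_\beta$ is a positive measure carried by the low-rank boundary strata of $\mathcal K$ (the singular-Wishart laws). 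Granting this, substitute $Y=\sum_{i=1}^n x_iA_i$. For $x\in\RR_{++}^n$ off the hypersurface $P=0$ (a dense open set, since $P\not\equiv0$) this $Y$ is positive definite, and $\operatorname{tr}(YS)=\sum_i x_i\operatorname{tr}(A_iS)$ with each $\operatorname{tr}(A_iS)\ge0$ because $A_i\succeq0$ and $S\in\mathcal K$. Pushing $\mu_\beta$ forward under the nonnegative linear map $S\mapsto(\operatorname{tr}(A_1S),\dots,\operatorname{tr}(A_nS))\in[0,\infty)^n$ produces a positive measure $\nu_\beta$ with $P(x)^{-\beta}=\int_{[0,\infty)^n}e^{-\langle x,t\rangle}\,d\nu_\beta(t)$, and differentiating under the integral shows $P^{-\beta}$ is completely monotone.

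\emph{Necessity.} Suppose $A_1,\dots,A_n$ span the ambient matrix space and $P^{-\beta}$ is completely monotone. Extracting a sub-basis among the $A_i$ we may take $L\colon x\mapsto\sum x_iA_i$ to be a linear isomorphism onto the ambient space ($n=N$), so $L(\RR_{++}^n)$ is an open convex subcone of $\operatorname{int}\mathcal K$. By Bernstein--Hausdorff--Widder, complete monotonicity of $P^{-\beta}$ means $P(x)^{-\beta}=\int_{[0,\infty)^N}e^{-\langle x,t\rangle}\,d\nu(t)$ with $\nu\ge0$; transporting through $L$ gives $\det(Y)^{-\beta}=\int e^{-\operatorname{tr}(YS)}\,d\mu(S)$ with $\mu=((L^{-1})^{\!*})_{\!*}\nu\ge0$, valid first on $L(\RR_{++}^n)$ and then, by real-analytic continuation of both sides on $\operatorname{int}\mathcal K$, on all of $\operatorname{int}\mathcal K$. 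A scaling argument (as $\lambda\to\infty$, $\det(\lambda Y)^{-\beta}=\lambda^{-m\beta}\det(Y)^{-\beta}\to0$, whereas the integral would blow up along a ray if $\mu$ charged the complement of $\mathcal K=\mathcal K^{*}$) shows $\mu$ is supported on $\mathcal K$. Thus $\det^{-\beta}$ is a positive Laplace transform on $\mathcal K$, and by the hard converse half of Gindikin's theorem this forces $\beta$ into the Wallach set of $\mathcal K$, which is exactly $R(m)$ (resp.\ $C(m)$); since $\beta\ge0$, this is precisely the asserted failure for $\beta\notin R(m)$ (resp.\ $\beta\notin C(m)$).

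\emph{Main obstacle.} Given Gindikin's formula, the sufficiency is purely formal. The real work --- and the source of both the strange discrete part $\{0,1,\dots,m-1\}$ and the threshold $m-1$ (halved in the real case) --- is the two-sided Wallach-set statement itself: (i) showing that the normalized matrix-gamma integral analytically continues, past its region of absolute convergence, to a \emph{positive} measure exactly at the exceptional lattice points and nowhere strictly between them, which is done by locating the poles of the real/complex Gindikin gamma function $\Gamma_{\mathcal K}(\beta)$ and identifying the residual distributions as singular-Wishart measures; and (ii) in the converse direction, ruling out any positive Laplace representation for non-Wallach $\beta$, for which the cleanest route is to exhibit one mixed partial derivative of $\det\bigl(I+\sum_k t_k v_kv_k^{*}\bigr)^{-\beta}$ (with generic rank-one $v_kv_k^{*}$) that is strictly negative --- essentially the combinatorial heart of the Scott--Sokal argument. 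One must also check, in the necessity reduction, that ``completely monotone along the spanning directions $A_i$'' genuinely upgrades to complete monotonicity on all of $\mathcal K$; that is what the analytic-continuation and support arguments above are for.
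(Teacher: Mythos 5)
The paper does not prove Theorem \ref{scso}; it is imported as a black box from the Scott--Sokal preprint, so there is no internal proof to compare against. Your route --- identify $R(m)$ and $C(m)$ with the Gindikin--Wallach sets of the symmetric cones of real symmetric, resp.\ complex hermitian, positive semidefinite matrices; obtain sufficiency by pushing the Riesz measure forward under $S\mapsto(\operatorname{tr}(A_1S),\dots,\operatorname{tr}(A_nS))$; obtain necessity from Bernstein--Hausdorff--Widder plus injectivity of the Laplace transform and the negative half of the Gindikin--Wallach theorem --- is the standard one, and is in substance how the cited source handles the determinantal case. The outline is sound, and you are candid about where the content lives; but as written this is a reduction, not a proof. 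The two-sided Wallach-set statement (positivity of the Riesz distribution exactly at $\{0,\tfrac12,\dots,\tfrac{m-1}2\}\cup(\tfrac{m-1}2,\infty)$, resp.\ $\{0,1,\dots,m-1\}\cup(m-1,\infty)$, and its failure at every point in the gaps) is cited rather than established, and it is essentially equivalent to the theorem being proved.

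Two steps in the necessity argument also need repair. First, ``extracting a sub-basis'' changes the polynomial $P$: you must note either that freezing the discarded variables at positive constants preserves complete monotonicity, or that completely monotone functions are closed under the pointwise limits $x_i\to 0^{+}$, before assuming $L$ is an isomorphism. Second, your scaling argument only places $\operatorname{supp}\mu$ inside the dual cone of $\operatorname{cone}(A_1,\dots,A_n)$, which contains $\mathcal K$ but may be strictly larger (the $A_i$ span the ambient space linearly, but their conical hull need not be all of $\mathcal K$); hence the Laplace integral need not converge on all of $\operatorname{int}\mathcal K$, and the proposed real-analytic continuation to the whole cone does not start. This is fixable: both $\mathcal L\mu$ and $\mathcal L R_\beta$ converge and equal $\det^{-\beta}$ on the open cone $L(\RR_{++}^n)$, and injectivity of the Laplace transform of distributions supported in proper cones already forces $\mu=R_\beta$, whence $R_\beta\ge 0$ and $\beta$ lies in the Wallach set. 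With these repairs, and granting the Gindikin--Wallach theorem, the argument is correct.
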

We will use the following elementary but useful lemma. 
\begin{lemma}\label{flip}
Let $A$ be an $m \times n$ matrix and $B$ be an $n\times m$ matrix, then 
$$
\det(I- AB)= \det(I-BA).
$$ 
\end{lemma}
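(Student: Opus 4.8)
The plan is to prove Lemma~\ref{flip} by a single block-matrix determinant computation; this handles the rectangular case directly and needs no limiting arguments. Form the $(m+n)\times(m+n)$ block matrix
$$
M = \begin{pmatrix} I_m & A \\ B & I_n \end{pmatrix}
$$
and evaluate $\det M$ in two different ways by block Gaussian elimination.

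First, eliminating the $(2,1)$ block $B$ using the invertible pivot $I_m$ gives the factorization
$$
M = \begin{pmatrix} I_m & 0 \\ B & I_n \end{pmatrix}\begin{pmatrix} I_m & A \\ 0 & I_n - BA \end{pmatrix},
$$
and since the determinant of a block triangular matrix is the product of the determinants of its diagonal blocks, $\det M = \det(I_m)\det(I_n - BA) = \det(I_n - BA)$. Second, eliminating instead the $(1,2)$ block $A$ using the pivot $I_n$ gives
$$
M = \begin{pmatrix} I_m & A \\ 0 & I_n \end{pmatrix}\begin{pmatrix} I_m - AB & 0 \\ B & I_n \end{pmatrix},
$$
whence $\det M = \det(I_m - AB)\det(I_n) = \det(I_m - AB)$. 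Comparing the two values of $\det M$ yields $\det(I_m - AB) = \det(I_n - BA)$, which is the assertion.

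The only things that need checking are the two factorizations — each is a one-line block multiplication — together with the formula for the determinant of a block triangular matrix, which is standard. So there is no genuine obstacle here; the statement is elementary linear algebra. (If one wished to avoid block matrices entirely, one could first treat the square case $m=n$ for invertible $A$ via $\det(I-AB) = \det(A)\det(A^{-1}-B) = \det(A^{-1}-B)\det(A) = \det(I-BA)$, extend to all square matrices by density of the invertible ones, and reduce the rectangular case by padding $A$ and $B$ with zero rows and columns to square size $\max(m,n)$, which changes neither side; but the block-matrix argument is cleaner and uniform, so I would present that one.)
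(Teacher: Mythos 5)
Your proof is correct and complete: both block factorizations multiply out to the same matrix $\left(\begin{smallmatrix} I_m & A \\ B & I_n \end{smallmatrix}\right)$, the triangular factors have determinant $1$, and the block-triangular determinant formula does the rest, with no invertibility or limiting argument needed even in the rectangular case. The paper states this lemma without proof, treating it as the well-known Sylvester (Weinstein--Aronszajn) determinant identity, so there is no argument in the paper to compare against; your Schur-complement computation is the standard proof and is exactly the kind of justification one would supply here.
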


\begin{proof}[Proof of Theorem \ref{put}]
We prove Theorem \ref{put} for real symmetric matrices, the proof for complex hermitian matrices is almost identical. 

The cases left to consider is for $\alpha \in [0,2]\setminus \{2/(k+1): k \in \NN\}$.  Then $\beta =1/\alpha  \notin R(m)$ for some $m \in \NN$. Choose positive semidefinite rank one matrices $A_1, \ldots, A_n$ that span the space of real symmetric $m \times m$ matrices. Then 
$$
P(x)= \det \left( \sum_{i=1}^n x_i A_i\right)
$$
is not completely monotone by Theorem \ref{scso}. Hence there is a vector $y \in \RR_{++}^n$ such that 
$x \mapsto P(y-x)^{-\beta}$ fails to have only nonnegative Taylor coefficients. Since 
$A:= \sum_{i=1}^n y_iA_i$ is positive definite we may write $A=B^{-1/2}B^{-1/2}$ for some positive definite matrix $B$. Let $B_i = B^{1/2}A_iB^{1/2}$ and write $B_i = u_iu_i^T$ for some vector 
$u_i \in \RR^m$. Collect the $u_i$'s as columns in an $m \times n$ matrix $U$. Then, by Lemma \ref{flip}, 
\begin{align*}
P(y-x)&= \det\left(A-\sum_{i=1}^n x_i A_i\right)=\det(A)\det\left(I-\sum_{i=1}^nx_i B_i\right)\\
&=\det(A)\det(I-UXU^T)= \det(A)\det(I-XU^TU).
\end{align*}
By Theorem \ref{macmahon} and the fact that $P(y-x)^{-\beta}$ has at least one negative Taylor coefficient, there is a vector $\nn \in \NN^n$ such that $\per_\beta(U^TU[\nn]) <0$, so that $\det_\alpha(U^TU[\nn]) <0$. The matrix $U^TU[\nn]$ is positive semidefinite by Remark \ref{nn}. 
\end{proof}

\end{document}